\documentclass[11pt]{amsart}
\usepackage{amsmath,amsfonts,amssymb,graphics,color,enumerate}
\usepackage[applemac]{inputenc}
\usepackage{hyperref}
\usepackage{stmaryrd}
\usepackage{amsthm}
\usepackage{mathrsfs}
\usepackage{mathtools}
\usepackage{bbm}
\usepackage{graphicx}
\newtheorem{theorem}{Theorem}[section]
\newtheorem{lemma}[theorem]{Lemma}
\newtheorem{prop}[theorem]{Proposition}
\newtheorem{remark}[theorem]{Remark}

\def\div{\operatorname{div}}
\providecommand{\R}{\mathbb{R}}

\providecommand{\varepsilon}{}
\renewcommand{\leq}{\leqslant}
\renewcommand{\geq}{\geqslant}

\textwidth =17.3cm
\textheight=22cm
\oddsidemargin = 0cm 
\evensidemargin = 0cm
\topmargin=-0.5cm
\numberwithin{equation}{section}
%
%
%
%
%
%
\begin{document}
\date{\today}
\title[Remote trajectory tracking at low Reynolds number]{Remote trajectory tracking of a rigid body in an incompressible fluid at low Reynolds number}
%
%
%
\author{J\'ozsef J. Kolumb\'an} 
\address{Institut f\"ur Mathematik, Universit\"at Leipzig, D-04109, Leipzig, Germany} 
%
%
%
\begin{abstract}
In this paper we study  the motion of a rigid body driven by Newton's law immersed in a stationary incompressible Stokes flow occupying a bounded simply connected domain. The aim is that of trajectory tracking of the solid by the means of a control in the form of Dirichlet boundary data on the outside boundary of the fluid domain. We show that it is possible to exactly achieve any smooth trajectory for the solid that stays away from the external boundary, by the means of such a remote control. The proof relies on some density methods for the Stokes system, as well as a reformulation of the solid equations into an ODE.
\end{abstract}
%
%
\maketitle
\tableofcontents
%
%
%
%
%
%
%
%
%
%
%
\section{Introduction and statement of results}

We are interested in investigating the evolution of a rigid body immersed in an incompressible fluid at low Reynolds number, under the influence of an external boundary control. The model is given by the following coupled system consisting of the stationary incompressible Stokes equation for the fluid: 
\begin{align}\label{eq:solidstokes1}
\begin{split}
-\Delta u + \nabla p &=0\text{ in }\mathcal{F}(t),\\
\div u &=0\text{ in }\mathcal{F}(t),\\
u&=u_S\text{ on }\partial\mathcal{S}(t),\\
u&=g\text{ on }\partial\Omega,
\end{split}
\end{align}
and Newton's balance law for the solid:
\begin{align}\label{eq:solidstokes2}
\begin{split}
m h''(t)&=-\int_{\partial\mathcal{S}(t)}\Sigma(u,p)n\, d\sigma,\\
(\mathcal I(t) \theta'(t))'&=-\int_{\partial\mathcal{S}(t)}(x-h(t))\wedge\Sigma(u,p)n\, d\sigma,
\end{split}
\end{align}
for $t\in[0,T]$. Here we use the following notations: 
\begin{itemize}
\item $\Omega\subset \R^3$ is a bounded, open, simply connected domain, which at all times $t\in[0,T]$ is split into a 
non-empty, closed, regular, connected and simply connected set
$\mathcal{S}(t)$ occupied by the solid, and the remaining set $\mathcal{F}(t):=\Omega\setminus\mathcal{S}(t)$ occupied by the fluid;
\item $u$ denotes the fluid velocity, taking values in $\R^3$, while $p$ is the scalar-valued fluid pressure, the Cauchy stress tensor is the symmetric $3\times 3$ matrix given by
\begin{align}\label{eq:sigma}
\Sigma(u,p)=-p\text{Id}_3 +2D(u),\text{ where }D(u)=\frac{1}{2} \nabla u + (\nabla u) ^T\text{ is the symmetric gradient};
\end{align}
\item $n$ always denotes the unit normal vector on $\partial\mathcal{F}(t)$ pointing outside the fluid;
\item $g:[0,T]\times \partial\Omega\to\R^3$ plays the role of the control (as Dirichlet boundary data), and is supported on a fixed, non-empty open part $\Gamma$ of the outer boundary $\partial\Omega$, further satisfying the flux condition $\int_\Gamma g\cdot n\,d\sigma=0$;
\item the solid $\mathcal{S}(t)$ is completely characterized by its center of mass $h(t)\in\R^3$ and angle of rotation $\theta(t)\in\R^3$, its evolution is obtained through rigid movement via
\begin{align}
\mathcal{S}(t)=h(t)+R(\theta(t))(\mathcal S_0-h_0),
\end{align}
with
\begin{align}
R(\theta)=\begin{pmatrix}
\cos(\theta_1) & -\sin(\theta_1) & 0 \\
\sin(\theta_1)& \cos(\theta_1)& 0\\
0& 0& 1
\end{pmatrix}
\begin{pmatrix}
\cos(\theta_2)& 0 & -\sin(\theta_2) \\
0& 1& 0 \\
\sin(\theta_2)& 0& \cos(\theta_2)
\end{pmatrix}
\begin{pmatrix}
1&0& 0\\
0&\cos(\theta_3) & -\sin(\theta_3)  \\
0&\sin(\theta_3)& \cos(\theta_3)
\end{pmatrix}
\end{align}
being the standard three-dimensional rotation matrix;
\item one then has that
\begin{align}
\frac{d}{dt}R(\theta(t))=\theta'(t)\wedge R(\theta(t)),
\end{align}
where $\theta'(t)\wedge$ denotes the canonical skew-symmetric matrix associated with the vector $\theta'(t)$, 
and hence the solid velocity is given by
\begin{align}
u_S(t,x)=h'(t)+\theta'(t)\wedge(x-h(t))\text{ for }x\in\mathcal{S}(t);
\end{align}
\item $m>0$ denotes the mass of the solid, $\mathcal I(t)$ is the positive definite inertial matrix, evolving according to Sylvester's law:
\begin{align}
\mathcal I(t)=R(\theta(t))\mathcal I_0 R(\theta(t))^T.
\end{align}
\end{itemize}

The problem we are interested in is that of trajectory tracking, i.e. steering the solid along any given sufficiently regular trajectory by the means of the external boundary control $g$.

Let us denote $q:=(h,\theta),q':=(h',\theta')\in\R^6$. 
We observe that the dependence with respect to $t$ in $\mathcal F(t)$ comes only from the dependence with respect to $q$, hence from here on we will use the (slightly abusive) notations $\mathcal F(q(t))$,  $\mathcal S(q(t))$.
Furthermore, since we want to avoid the solid touching the outer boundary $\partial\Omega$, we will consider only positions $q$ from the set
$$\mathcal{Q}_\delta=\{q\in\R^6:\ \mathcal{S}(q)\subset\Omega,\ d(\mathcal{S}(q),\partial\Omega)\geq\delta\},$$
for $\delta>0$.

We introduce the space in which we look for the spacial part of the control $g(t,\cdot)$ as
$$ H^{1/2}_m(\Gamma):=\left\{\phi\in  H^{1/2}(\partial\mathcal F;\R^3):\ \text{supp }\phi\subset\Gamma,\ \int_{\Gamma} \phi \cdot n\, d\sigma =0\right\}.$$
Note that if $g\in C^0([0,T];H^{1/2}_m(\Gamma))$ and $q\in C^2([0,T];\mathcal Q_\delta)$ are given, system \eqref{eq:solidstokes1} has a unique solution $u\in C^0([0,T];H^1(\mathcal{F}(q(t)))) $, $p\in C^0([0,T];L^2(\mathcal{F}(q(t)))/\R) $, see for example \cite{tem}, where
$$L^2(A)/\R:=\left\{\phi\in L^2(A):\ \int_A \phi(x)\, dx=0\right\}\text{ for }A\subset\Omega.$$
We observe that there is a slight abuse of notation in writing $C^0([0,T];H^1(\mathcal{F}(q(t))))$ for a space of functions defined for each $t$ in the fluid domain $\mathcal{F}(q(t))$.
We will henceforth use the convention that for a functional space $X$ of functions depending on the variable $x$, the notation  $C^0([0,T]; X (\mathcal{F}(q(t))))$ signifies the space of functions defined for each $t$ in the fluid domain ${\mathcal F}(q(t))$, which can be extended to functions in $C^0([0,T];X(\R^{2}))$.

 Furthermore, if $\psi\in H^1(\mathcal{F}(q(t));\R^3)$ for some fixed $t\in[0,T]$ with $\div \psi=0$, one may test the Stokes equation in \eqref{eq:solidstokes1} with $\psi$ to obtain via integration by parts that
\begin{align}\label{eq:stibp}
0=\int_{\mathcal{F}(q(t))}(-\Delta u +\nabla p)\cdot \psi \, dx=2\int_{\mathcal{F}(q(t))}D(u):D(\psi) \, dx-\int_{\partial\mathcal{S}(q(t))}\Sigma(u,p)n\cdot \psi \, d\sigma.
\end{align}

\begin{figure}[h]
\includegraphics{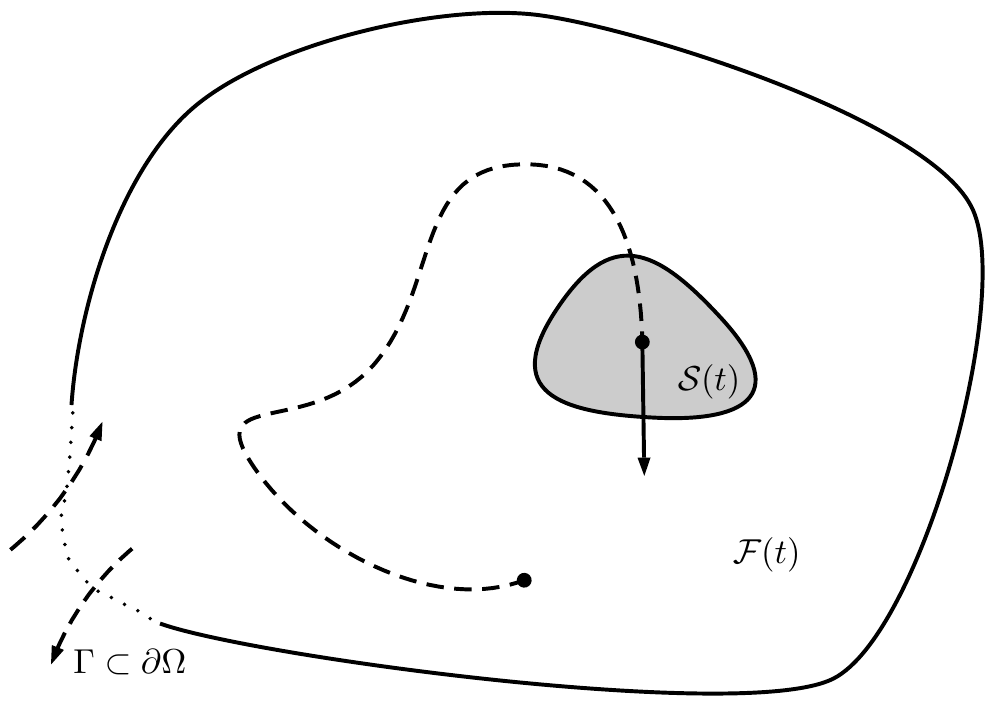}
\caption{The setting of the trajectory tracking problem.}
\end{figure}

The main result of the paper is the following trajectory tracking result for the solid movement.
\begin{theorem}\label{thm:main1}
For any $\delta >0$, there exists  a  finite dimensional subspace $\mathcal E$ of $H^{1/2}_m(\Gamma)$
such that
the following holds.
Let  $T>0$, $\mathcal K\subset \R^6\times\R^6$ compact, then there exists
 a control law
$\mathcal C \in \text{Lip}(\cup_{q\in\mathcal Q_\delta}\{q\}\times \mathcal K;\mathcal E),$
which satisfies that for  any given trajectory  $q$ in $C^{2} ([0,T] ; \mathcal Q_\delta)$ with $(q'(t),q''(t))\in\mathcal K$ for $t\in[0,T]$,
there exists a velocity field  $u\in C^0([0,T];H^1(\mathcal{F}(q(t)))) $  and pressure $p\in C^0([0,T];L^2(\mathcal{F}(q(t)))/\R) $
such that $(q,u,p)$ is the unique solution of
\eqref{eq:solidstokes1}-\eqref{eq:solidstokes2} with
$g(t,\cdot)=\mathcal C (q(t),q'(t),q''(t))(\cdot)$  for all $t$ in $[0,T]$. In addition, the cost of the control law $\mathcal C$ can be estimated by
\begin{align}\label{eq:cost}
\|\mathcal C (q,q',q'')(\cdot)\|_{H^{1/2}_m(\Gamma)}\leq C_\delta(|q''|+|q'|+|q'|^2),\ \forall (q,q',q'')\in\mathcal Q_\delta\times\mathcal K,
\end{align}
where $C_\delta>0$ only depends on $\mathcal{Q}_\delta$.
\end{theorem}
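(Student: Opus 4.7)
The plan is to reformulate the coupled fluid-solid system as an ODE on the position $q$, in which the control appears through a bounded linear operator from $H^{1/2}_m(\Gamma)$ into $\R^6$. Exploiting the linearity of the Stokes system, I would decompose the fluid velocity as $u=u_{q'}+u_g$, where $u_{q'}$ solves \eqref{eq:solidstokes1} in $\mathcal{F}(q)$ with the rigid-body datum $u_S$ on $\partial\mathcal{S}(q)$ and zero on $\partial\Omega$, while $u_g$ solves the same system with zero on $\partial\mathcal{S}(q)$ and the control $g$ on $\partial\Omega$. Plugging this splitting into \eqref{eq:solidstokes2} and using \eqref{eq:stibp} allows one to rewrite the solid equation in the abstract form
\begin{equation*}
\mathcal{M}(q)\,q''=\mathcal{N}(q,q')+\mathcal{K}(q)\,g,
\end{equation*}
where $\mathcal{M}(q)$ is the $6\times 6$ generalized mass-inertia matrix (nondegenerate uniformly on $\mathcal Q_\delta$), $\mathcal{N}(q,q')$ collects the drag linear in $q'$ and the quadratic Coriolis-type term coming from $(\mathcal I(t)\theta')'$, and $\mathcal{K}(q):H^{1/2}_m(\Gamma)\to\R^6$ is the bounded linear \emph{control-to-force/torque} operator. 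Trajectory tracking then reduces to constructing, for every $q\in\mathcal Q_\delta$ and any prescribed $(q',q'')\in\mathcal K$, a preimage $g\in H^{1/2}_m(\Gamma)$ of the right-hand side under $\mathcal K(q)$, depending in a Lipschitz way on $(q,q',q'')$.

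The central analytical step, and the one I expect to be the main obstacle, is the uniform surjectivity of $\mathcal K(q)$. For a fixed $q$, I would argue by duality: for $(V,\omega)\in\R^6$ let $w_{V,\omega}$ denote the Stokes solution with rigid-body boundary datum $V+\omega\wedge(x-h)$ on $\partial\mathcal{S}(q)$ and zero on $\partial\Omega$. Integration by parts in \eqref{eq:stibp} yields the representation
\begin{equation*}
(V,\omega)\cdot \mathcal K(q)g=\int_\Gamma g\cdot \Sigma(w_{V,\omega},p_{V,\omega})\,n\,d\sigma.
\end{equation*}
If $\mathcal K(q)$ were not surjective, there would exist $(V,\omega)\neq 0$ for which the Cauchy data $(w_{V,\omega},\Sigma(w_{V,\omega},p_{V,\omega})n)$ vanish on $\Gamma$; a unique continuation result for the Stokes system (à la Fabre-Lebeau) then forces $w_{V,\omega}\equiv 0$ in $\mathcal F(q)$ and hence $(V,\omega)=0$ by taking the trace on $\partial\mathcal{S}(q)$, a contradiction. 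This gives pointwise surjectivity; to upgrade it to a uniform statement I would use that the set $\mathcal Q_\delta$ is, after quotienting the angular variable $\theta$ by $(2\pi\Z)^3$, compact, and that $q\mapsto \mathcal K(q)\phi$ is continuous for each fixed $\phi\in H^{1/2}_m(\Gamma)$ (because the fluid domain varies smoothly in $q$ on $\mathcal Q_\delta$). By a covering argument, choose at each $q_0$ elements $\phi_1^{q_0},\dots,\phi_6^{q_0}\in H^{1/2}_m(\Gamma)$ mapped by $\mathcal K(q_0)$ to the canonical basis of $\R^6$; for $q$ in a small neighborhood their images remain linearly independent. Extracting a finite subcover produces finitely many such $\phi_i^{q_j}$, whose linear span defines the desired finite-dimensional subspace $\mathcal E\subset H^{1/2}_m(\Gamma)$, with $\mathcal K(q)|_{\mathcal E}$ surjective and of uniformly bounded right inverse for every $q\in\mathcal Q_\delta$.

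It then remains to define the control law and check the cost bound. Since $\mathcal K(q)|_{\mathcal E}$ has constant (full) rank as $q$ varies in $\mathcal Q_\delta$, its Moore-Penrose pseudoinverse $\mathcal K(q)|_{\mathcal E}^{+}:\R^6\to\mathcal E$ is smooth in $q$ with a uniform bound, so
\begin{equation*}
\mathcal C(q,q',q''):=\mathcal K(q)|_{\mathcal E}^{+}\bigl(\mathcal M(q)q''-\mathcal N(q,q')\bigr)
\end{equation*}
is Lipschitz on $\mathcal Q_\delta\times\mathcal K$ with values in the finite-dimensional space $\mathcal E$, and by construction the resulting $g(t,\cdot)=\mathcal C(q(t),q'(t),q''(t))$ generates exactly the prescribed trajectory $q$. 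The cost estimate \eqref{eq:cost} follows directly from the explicit bound $\|\mathcal N(q,q')\|\lesssim |q'|+|q'|^2$ (the $|q'|^2$ coming from the Coriolis term $\theta'\wedge\mathcal I\theta'$ and the $|q'|$ from the drag linear in $q'$) combined with $\|\mathcal M(q)q''\|\lesssim |q''|$ and the uniform bound on $\|\mathcal K(q)|_{\mathcal E}^{+}\|$ over $\mathcal Q_\delta$.
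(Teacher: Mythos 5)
Your proposal is correct and shares the paper's skeleton---the same splitting $u=u_{q'}+u_g$ that turns Newton's law into a six-dimensional quasi-linear ODE, a duality plus unique-continuation argument (Fabre--Lebeau) as the analytic core, and compactness of the admissible positions to extract a fixed finite-dimensional control space with uniform bounds---but you implement the key step differently. The paper proves a genuinely stronger statement, Proposition \ref{prop:dens}: the stresses $\Sigma(u,p)n|_{\partial\mathcal S(q)}$ generated by boundary controls are dense in $(H^{1/2}_m(\partial\mathcal S(q)))'$; it then selects controls whose force/torque matrix approximates the invertible Stokes resistance matrix $\mathcal K(q)$ to within $\varepsilon$, glues them Lipschitz-continuously in $q$ through a partition of unity subordinate to a finite cover, and gets invertibility of the control matrix by perturbation. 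You instead prove only the weaker (but sufficient) fact that the control-to-force/torque operator is onto $\R^6$, by pairing against the six rigid velocities and applying unique continuation to the elementary solutions, and you put all the $q$-dependence into the fixed matrix $\mathcal K(q)|_{\mathcal E}$ and its Moore--Penrose right inverse, which dispenses with both the $\varepsilon$-approximation of the resistance matrix and the partition of unity; this is arguably leaner, and your observation that $\mathcal Q_\delta$ is compact only modulo $2\pi$-periodicity in $\theta$ is more careful than the paper's phrasing. What the paper's route buys is the density result itself (of independent interest, e.g.\ for approximation schemes and for Remark \ref{rmk:ctrl}) and an explicit treatment of the one point you assert rather than prove: the Lipschitz dependence on $q$ of the force exerted by a fixed control (and of $\mathcal K(q)$), which the paper reduces to the domain-dependence results it cites from H\"ofer--Prange--Sueur and which you need to make the Lipschitz regularity of $\mathcal C$ and the uniform pseudoinverse bound rigorous. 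Two small points in your duality step: the vanishing of $\int_\Gamma g\cdot \Sigma(w_{V,\omega},p_{V,\omega})n\,d\sigma$ over all zero-flux $g$ supported in $\Gamma$ only forces the normal stress to be a constant multiple of $n$ there, so you must normalize the pressure constant before invoking unique continuation (the paper's proof makes the same implicit normalization), and the auxiliary problem with rigid datum on $\partial\mathcal S(q)$ is solvable because rigid velocities have zero flux across the closed surface, a compatibility condition worth stating.
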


\textbf{Comparison to literature.}

Note that similarly to the trajectory tracking result from \cite{RTT} in the inviscid case, our control is of feedback form, depending on the solid position, velocity and acceleration. However, there is no circulation or vorticity involved since we consider the low Reynolds number case, hence the control only depends on the finite dimensional quantity $(q,q',q'')$. This can be an advantage in terms of possible practical implementation, compared to the discussion in the very last paragraph of Section 3 of \cite{RTT}, where the disadvantages of having a feedback law depending on the vorticity (as an infinite dimensional quantity) were presented.
Furthermore, the control takes values only in the finite-dimensional subspace $\mathcal E$ of $H^{1/2}_m(\Gamma)$, which can be a further advantage (in addition to the aforementioned feedback form) in such a direction of practical use. 
However, one should note that in this paper we use completely different methods of proof than in \cite{RTT}, where the authors used a nonlinear
method to solve linear perturbations of nonlinear equations associated with a quadratic operator.
Our proof rather relies on reformulating the Newton equations into a quasi-linear ODE. We then prove exact trajectory tracking for this ODE by using a density argument for our Stokes system which can be seen as a generalization of the argument from Section 4 of \cite{GH}.
In addition, once more thinking in terms of practical implementation, one may note similarly as in \cite{GH}, that the density result Lemma \ref{lem:pou} from Section \ref{sec:ctrl} below can be advantageous in an approximation scheme. Finally, we note in Remark \ref{rmk:ctrl} at the end of the paper that in fact if the trajectory $q$ corresponds to the solution of the uncontrolled system on some subinterval of $[0,T]$, then the associated feedback control can be turned off on this subinterval. Once again in this context we may highlight the fact that although the fluid model is simpler than in \cite{RTT} (linear Stokes versus nonlinear Euler), we obtain more properties for our control law which can also be of interest from an engineering point of view.

Let us compare our result to the local null-controllability results obtained in the case of a rigid body evolving in a fluid modelled by the Navier-Stokes equations (with Dirichlet boundary conditions) from \cite{BG, BO, IT}. While our model is simpler due to the fact that we only consider the stationary Stokes equations (hence there is no time derivative or nonlinearity involved in the PDE part), if one thinks only on the level of the solid, our result of exact trajectory tracking is also stronger than merely local null-controllability. However, in the three papers mentioned above, the authors actually achieve  local null-controllability for the solid position  and the velocities of both the solid and the fluid, while in our current paper we only track the trajectory of the solid. In fact one should note that in our model there is always some fluid stuck to the solid boundary, so exactly controlling the fluid velocity in the whole domain at the same time as the prescribing the solid trajectory would run into difficulties. However, Lagrangian controllability of parts of the fluid away from the solid could be possible at the same time, using a strategy as in \cite{GH}.
On a different note, while the papers \cite{BG, BO, IT} rely on on Carleman estimates for the linearized equation, one can compare this to our use of a unique continuation argument for an appropriate Stokes system to prove the density result in Section \ref{sec:ctrl} below.

Let us also recall the difficult open problem of global exact controllability of the final position and velocity of a body evolving in a viscous fluid modelled by the Navier-Stokes equations with Dirichlet boundary conditions, due to the effect of boundary layers. However, if one considers the so-called Navier slip-with-friction boundary conditions instead, such a controllability has been achieved in \cite{K}, by generalizing certain methods introduced in \cite{CMS} in the case of a fluid alone.
However, as mentioned in \cite{K}, the methods used in that paper are not applicable in order to achieve trajectory tracking, since they rely on a time-rescale of the system.

In a different direction, one may also think of comparing our result to the controllability of swimmers (i.e. deformable bodies) moving in viscous fluids, where the control is no longer on the outside boundary of the fluid domain, but consists of the deformation of the solid itself. For the case where the fluid is modelled the Navier-Stokes equations (either stationary or non-stationary), see the papers  \cite{fish1, fish2, fish3, fish4}, while the case where the fluid is at low Reynolds number (hence modelled by the quasi-static Stokes equation) is handled in \cite{Al1,Al2,Loh,fish5}. In particular, some of the main steps of \cite{fish5} can be compared to those of our paper. For instance, the authors also use an ODE reformulation (which differs from ours due to the fact that their solid boundary deforms with the control, while ours does not), and their Lemma 4.1 can be paralleled to our Proposition \ref{prop:dens} and its use in Section \ref{sec:prmain1} to approximate certain elementary functions.

\textbf{Generalizations and open problems.}

A few remarks are in order regarding certain generalizations of Theorem \ref{thm:main1}. One may establish analogous results in the following cases: 
\begin{itemize}
\item if $\Omega$ is not simply connected;
\item if there are multiple solids evolving in the fluid instead of just one (see e.g. \cite{RTT});
\item if there is a background flow present (see \cite{Flimitcurves});
\item or if one considers the problem in the two-dimensional setting.
\end{itemize}
The respective assumptions were only made in order to simplify the presentation. Particularly in the view of tracking the trajectory of multiple solids,  in terms of possible practical applications, one can mention similar problems as those in \cite{RTT}, such as regrouping or dispersing solids, but in the case of low Reynolds number fluids. Such examples include cleaning up undesirable solids from the fluid or delivering some medication (solid particles) via an external fluid.

We further note the case of trajectory tracking for a solid in an unsteady Stokes flow as an interesting open problem. However, the extra $\partial_t u$ term seems incompatible with our current strategy, and for the moment we have not found any other approach to counteract this.

As mentioned above when comparing to the existing literature, trajectory tracking when the fluid is governed by the Navier-Stokes equations with Dirichlet boundary conditions is an even more challenging open problem.
 In fact, one should note that if one can prove exact trajectory tracking, then exact controllability of the position and velocity follows trivially, by picking an appropriate trajectory having the desired final position and velocity.

While we specifically introduced the set of admissible positions $\mathcal Q_\delta$ in order to avoid the solid touching the outer boundary $\partial\Omega$, one can also be interested in achieving controlled collisions in this sense. In such a direction one should mention the papers \cite{Hil1,Hil2} where the authors have shown that at least in the uncontrolled case, in the setting when the solid is a ball and the outer boundary is flat, there can be no collision (both in case of the Navier-Stokes and) in the case of the stationary Stokes system. However, a priori  this does not rule out the possibility that in some other controlled regime, one could make the solid touch the outer boundary.

\textbf{Structure of the paper.} In Section \ref{sec:reform} we reformulate the Newton equations into a quasi-linear ODE. In Section \ref{sec:ctrl} we show a density argument for controlled solutions of the Stokes system. Finally in Section \ref{sec:prmain1} we prove our main result by appropriately combining these two parts together.

\section{Reformulation of the solid equations}\label{sec:reform}

In this Section we will reformulate the system \eqref{eq:solidstokes1}-\eqref{eq:solidstokes2} by splitting the fluid velocity $u$ into two parts, one corresponding to the control and the other to the movement of the solid. For similar reformulation methods for rigid movement in Stokesian dynamics, see for instance the references \cite{Bossis} and \cite{brenner}.

More precisely, for $q\in\mathcal Q_\delta$, we introduce the so-called "elementary rigid velocities"
\begin{align}\label{eq:erv}
\phi_i(q,x)= \left\{
\begin{array}{ll}
      e_i\text{ for }i=1,2,3,\\
      e_{i-3}\wedge(x-h)\text{ for }i=4,5,6,
\end{array} 
\right. 
\end{align}
where $(e_1,e_2,e_3)$ is the canonical basis of $\R^3$. We may then associate the so-called "elementary Stokes solutions" as the unique smooth solutions $(V_i(q,\cdot),P_i(q,\cdot))$ of the problem
\begin{align}\label{eq:elemst}
\begin{split}
-\Delta_x V_i + \nabla_x P_i &=0\text{ in }\mathcal{F}(q),\\
\div_x V_i &=0\text{ in }\mathcal{F}(q),\\
V_i&=\phi_i\text{ on }\partial\mathcal{S}(q),\\
V_i&=0\text{ on }\partial\Omega,
\end{split}
\end{align}
for $i=1,\ldots,6$. We will also use the notation $V(q,\cdot):=(V_1,\ldots,V_6)(q,\cdot)$ and $P(q,\cdot):=(P_1,\ldots,P_6)(q,\cdot)$.

We further consider the contribution due to the control $g\in H^{1/2}_m(\Gamma)$, denoted by $(u^c[g],p^c[g])\in H^1(\mathcal{F}(q))\times( L^2(\mathcal{F}(q))/\R)$, defined as the solution of the problem
\begin{align}\label{eq:ctrlst}
\begin{split}
-\Delta u^c + \nabla p^c &=0\text{ in }\mathcal{F}(q),\\
\div u^c &=0\text{ in }\mathcal{F}(q),\\
u^c&=0\text{ on }\partial\mathcal{S}(q),\\
u^c&=g\text{ on }\partial\Omega.
\end{split}
\end{align}
Clearly $g\mapsto(u^c[g],p^c[g])$ is linear.

It is then easy to see that the solution $(u,p)$ of \eqref{eq:solidstokes1} can be written as
\begin{align}\label{eq:split}
u(t,x)=q'(t)\cdot V(q(t),x) + u^c(t,x),\quad p(t,x)=q'(t)\cdot P(q(t),x) + p^c(t,x).
\end{align} 
We may further define the mass matrix
\begin{align}\label{eq:mass}
\mathcal M(q):=\begin{pmatrix}
m\text{Id}_3 & 0_3 \\
0_3 & \mathcal I(q),
\end{pmatrix}
\end{align}
as well as the so-called "Stokes resistance matrix"
\begin{align}\label{eq:stresm}
\mathcal K(q):=\left(\int_{\partial\mathcal S(q)}\Sigma(V_i(q,\cdot),P_i(q,\cdot))n \cdot \phi_j(q,\cdot)\,d\sigma\right)_{i,j=1,\ldots,6}=2\left(\int_{\mathcal F(q)}D(V_i(d,\cdot)):D(V_j(d,\cdot))\,dx\right)_{i,j=1,\ldots,6},
\end{align}
where we have used an integration by parts similar to \eqref{eq:stibp} to obtain the second equality.
It can be checked that
$\mathcal K(q)$ is symmetric, positive definite and invertible, and the map $q\in\mathcal Q_\delta\mapsto\mathcal K(q)$ is Lipschitz (see Section 5 in \cite{Flimitcurves}).

Then, using \eqref{eq:sigma} and \eqref{eq:split}, the Newton equations \eqref{eq:solidstokes2} become 
\begin{align}\label{eq:ode}
\frac{d}{dt}(\mathcal M(q) q')=- \mathcal K(q) q' -\left(\int_{\partial\mathcal S(q)}\Sigma(u^c,p^c)n \cdot \phi_i(q,\cdot)\,d\sigma\right)_{i=1,\ldots,6}
\end{align}
Note that if $g=0$, this reduces to a quasi-linear ordinary differential equation.

\section{A density result for controlled Stokes systems}\label{sec:ctrl}

In this Section we generalize the density result Theorem 4.1 from \cite{GH} to our setting.

Let us consider the following simplified model where we neglect the solid displacement. We fix some $q\in\mathcal{Q}_\delta$ and
introduce the space
\begin{align}
H^{1/2}_m(\partial\mathcal{S}(q)):&=\left\{\phi\in  H^{1/2}(\partial\mathcal{S}(q);\R^3):\  \int_{\partial\mathcal{S}(q)} \phi \cdot n\, d\sigma =0\right\},
\end{align} 
and look at the problem
\begin{align}\label{eq:modst}
\begin{split}
-\Delta u +\nabla p&=0\text{ in }\mathcal F(q),\\
\div u &= 0\text{ in }\mathcal F(q),\\
u&=0\text{ on }\partial\mathcal S(q),\\
u&=\alpha\text{ on }\partial\Omega,
\end{split}
\end{align}
for some given $\alpha\in H^{1/2}_m(\Gamma)$.

The main result of this section is the following proposition.

\begin{prop}\label{prop:dens}
For any $q\in\mathcal Q_\delta$, the set
$$\{\Sigma(u,p)n|_{\partial\mathcal S(q)}:\ (u,p)\text{ solves }\eqref{eq:modst} \text{ for some }\alpha\in H^{1/2}_m(\Gamma)\}$$
is dense in $ (H^{1/2}_m(\partial\mathcal S(q)))'$.
\end{prop}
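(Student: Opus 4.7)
The plan is a Hahn--Banach duality argument: assume $\psi \in H^{1/2}_m(\partial \mathcal S(q))$ annihilates the claimed set of boundary stresses, and deduce $\psi = 0$. This converts density in the dual into a uniqueness statement for an auxiliary (adjoint) Stokes problem, combined with a unique continuation property at the outer boundary.

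First I would introduce the adjoint Stokes system with $\psi$ as Dirichlet data on the solid and zero on the external boundary: let $(w,\pi) \in H^1(\mathcal F(q)) \times (L^2(\mathcal F(q))/\R)$ be the unique solution of
\begin{align*}
-\Delta w + \nabla \pi &= 0 \ \text{ in } \mathcal F(q), \quad \div w = 0 \ \text{ in } \mathcal F(q), \\
w &= \psi \ \text{ on } \partial \mathcal S(q), \quad w = 0 \ \text{ on } \partial\Omega.
\end{align*}
The zero-flux condition $\int_{\partial \mathcal S(q)} \psi\cdot n\,d\sigma=0$ built into $H^{1/2}_m(\partial\mathcal S(q))$ ensures solvability. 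Applying the integration by parts identity of type \eqref{eq:stibp} to $(u,p)$ tested against $w$ and to $(w,\pi)$ tested against $u$, and exploiting the symmetry of the bilinear form $D(\cdot):D(\cdot)$ together with the matching boundary conditions, one obtains the reciprocity identity
\begin{equation*}
\int_{\partial \mathcal S(q)} \Sigma(u,p) n \cdot \psi \, d\sigma \;=\; \int_{\Gamma} \Sigma(w,\pi) n \cdot \alpha \, d\sigma
\end{equation*}
valid for every $\alpha \in H^{1/2}_m(\Gamma)$ and corresponding solution $(u,p)$ of \eqref{eq:modst}.

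The annihilation hypothesis makes the left-hand side vanish for every such $\alpha$. Hence $\Sigma(w,\pi)n|_{\Gamma}$ pairs to zero against all of $H^{1/2}_m(\Gamma)$, which forces $\Sigma(w,\pi)n = c\,n$ on $\Gamma$ for some constant $c$: testing against tangential $\alpha$'s kills tangential components, and testing against normal $\alpha$'s of zero mean on $\Gamma$ makes the normal component constant. Since the pressure is defined only up to a constant, replacing $\pi$ by $\pi + c$ yields $\Sigma(w,\pi) n = 0$ on $\Gamma$ without loss of generality.

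The last and most delicate step is unique continuation: $(w,\pi)$ solves the homogeneous stationary Stokes system in $\mathcal F(q)$, and both $w$ and the normal stress $\Sigma(w,\pi)n$ vanish on the non-empty open subset $\Gamma \subset \partial \Omega$. I would extend $w$ and $\pi$ by zero across $\Gamma$ into a small open region exterior to $\Omega$; the vanishing Cauchy data are precisely the two jump conditions needed for the extension to remain a weak Stokes solution in the enlarged domain. Interior regularity then shows $(w,\pi)$ is smooth across $\Gamma$, and unique continuation for the stationary Stokes system (as in Fabre--Lebeau) forces $w \equiv 0$ throughout $\mathcal F(q)$. Consequently $\psi = w|_{\partial \mathcal S(q)} = 0$, which is the desired density. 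The main obstacle I anticipate is precisely this unique continuation step, since it is the only place where a non-variational, genuinely PDE-theoretic tool enters the argument.
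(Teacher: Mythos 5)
Your proposal is correct and follows essentially the same route as the paper: a Hahn--Banach duality reduction, the auxiliary Stokes problem with the annihilator $\psi$ as Dirichlet data on $\partial\mathcal S(q)$, the reciprocity identity obtained by testing each system against the other, and unique continuation from vanishing Cauchy data on $\Gamma$ (the paper cites Fabre--Lebeau and Boulakia--Egloffe--Grandmont directly rather than performing the zero extension by hand). Your extra care with the zero-flux constraint --- deducing $\Sigma(w,\pi)n=c\,n$ on $\Gamma$ and absorbing $c$ into the pressure --- is a point the paper's proof passes over silently, and it is handled correctly.
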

\begin{proof}
We shall prove that
\begin{align}\label{eq:zset}
\left\{w\in H^{1/2}_m(\partial\mathcal S(q)):\text{ for any } \alpha\in H^{1/2}_m(\Gamma)\text{ the solution }(u,p)\text{ of }\eqref{eq:modst}\text{ satisfies }\int_{\partial\mathcal S(q)}\Sigma(u,p)n\cdot w\, d\sigma=0\right\}\end{align}
contains only $0$,
from which the claim follows by duality.

With any $w$ in the above set, we may associate the solution of the following system:
\begin{align}\label{eq:vst}
\begin{split}
-\Delta v +\nabla q&=0\text{ in }\mathcal F(q),\\
\div v &= 0\text{ in }\mathcal F(q),\\
v&=w\text{ on }\partial\mathcal S(q),\\
v&=0\text{ on }\partial\Omega,
\end{split}
\end{align}
which satisfies $v\in H^1(\mathcal F(q))$.

Therefore, for given $\alpha\in H^{1/2}_m(\Gamma)$, we may test \eqref{eq:vst} with the solution $u$ of \eqref{eq:modst} to obtain using integration by parts that
\begin{align*}
2\int_{\mathcal F(q)} D(u):D(v)\, dx=\int_{\Gamma}\Sigma(v,q)n \cdot \alpha \, d\sigma.
\end{align*}
On the other hand, testing \eqref{eq:modst} with $v$ similarly gives
\begin{align*}
2\int_{\mathcal F(q)} D(u):D(v)\, dx=\int_{\partial\mathcal S(q)}\Sigma(u,p)n\cdot w \, d\sigma=0,
\end{align*}
due to $w$ belonging to \eqref{eq:zset}.

Hence we deduce that
\begin{align*}
\int_{\Gamma}\Sigma(v,q)n \cdot \alpha \, d\sigma=0,\ \forall 
\alpha\in H^{1/2}_m(\Gamma),
\end{align*}
from where it follows that $\Sigma(v,q)n=0$ in $ (H^{1/2}_m(\Gamma))'$.

By unique continuation for the Stokes-system (see for instance Corollary 1.1 in \cite{Fabre} or Corollary 1.2 in \cite{BEG}), since $v=0$ on $\Gamma$, it follows that $v=0$ in $\mathcal F(q)$, and hence $w=0$. 

\end{proof}

\section{Proof of Theorem \ref{thm:main1}}\label{sec:prmain1}

We are now in position to prove our main result regarding the trajectory tracking of a solid immersed in a stationary Stokes flow.

In order to obtain a finite family of controls to span the subspace $\mathcal E$ we are looking for,
we will make use of the following lemma.

\begin{lemma}\label{lem:pou}
Let $\delta>0$, $\varepsilon>0$, there exists a finite dimensional subspace $\mathcal E_\varepsilon$ of $H^{1/2}_m(\Gamma)$ and Lipschitz mappings 
$$
q\in\mathcal Q_\delta\mapsto \bar  g^\varepsilon_i(q,\cdot)\in\mathcal E_\varepsilon\text{ for }i=1,\ldots,6,$$ such that for any $q\in\mathcal Q_\delta$ there holds
\begin{align}\label{eq:close}
\left|\left(\int_{\partial\mathcal S(q)}\Sigma(u^c[\bar g^\varepsilon_i(q,\cdot)],p^c[\bar g^\varepsilon_i(q,\cdot)])n \cdot \phi_j(q,\cdot)\,d\sigma\right)_{i,j=1,\ldots,6}-\mathcal K(q)\right| \leq\varepsilon.
\end{align}
\end{lemma}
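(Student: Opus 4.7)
The plan is to combine the pointwise density result of Proposition \ref{prop:dens} with a Lipschitz partition of unity on $\mathcal Q_\delta$, exploiting the linearity of $g \mapsto (u^c[g], p^c[g])$ to glue locally chosen controls into a global feedback law.

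First I would fix $q_0 \in \mathcal Q_\delta$ and note that each rigid velocity $\phi_j(q_0, \cdot)$ restricted to $\partial \mathcal S(q_0)$ belongs to $H^{1/2}_m(\partial \mathcal S(q_0))$, since $\phi_j$ is divergence-free in $\R^3$ so its normal flux across the closed surface $\partial \mathcal S(q_0)$ vanishes. Applying Proposition \ref{prop:dens} once for each $i \in \{1,\ldots,6\}$, and recalling the definition \eqref{eq:stresm} of $\mathcal K(q_0)_{i,j}$ as a pairing of $\Sigma(V_i,P_i)n$ against $\phi_j(q_0, \cdot)$, I can select $g_i^{(q_0)} \in H^{1/2}_m(\Gamma)$ so that each of the $36$ quantities
$$\int_{\partial \mathcal S(q_0)} \Sigma(u^c[g_i^{(q_0)}], p^c[g_i^{(q_0)}])\, n \cdot \phi_j(q_0, \cdot)\, d\sigma$$
differs from $\mathcal K(q_0)_{i,j}$ by less than $\varepsilon/3$. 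Using then the Lipschitz dependence $q \mapsto \mathcal K(q)$ stated after \eqref{eq:stresm}, together with the analogous continuous dependence of $q \mapsto \int_{\partial \mathcal S(q)} \Sigma(u^c[g], p^c[g]) n \cdot \phi_j(q, \cdot)\, d\sigma$ at fixed $g$ (obtained by transporting \eqref{eq:ctrlst} onto a reference fluid domain via a smooth $q$-dependent diffeomorphism and appealing to standard Stokes regularity), I find an open neighborhood $U_{q_0} \subset \mathcal Q_\delta$ of $q_0$ on which this estimate persists with $\varepsilon/3$ replaced by $\varepsilon$.

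Since $\mathcal Q_\delta$ is compact after identifying the rotation parameters $\theta$ modulo $2\pi$, I extract a finite subcover $U_{q_1}, \ldots, U_{q_N}$ and a subordinate Lipschitz partition of unity $\{\chi_k\}_{k=1}^N$. Setting
$$\mathcal E_\varepsilon := \mathrm{span}\{g_i^{(q_k)} :\ 1 \leq i \leq 6,\ 1 \leq k \leq N\}, \qquad \bar g^\varepsilon_i(q, \cdot) := \sum_{k=1}^N \chi_k(q)\, g_i^{(q_k)},$$
the mapping $\bar g^\varepsilon_i : \mathcal Q_\delta \to \mathcal E_\varepsilon$ is Lipschitz as a finite sum of Lipschitz scalars times fixed elements of $\mathcal E_\varepsilon$. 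The linearity of $g \mapsto (u^c[g], p^c[g])$ then writes the $(i,j)$-entry of the matrix on the left-hand side of \eqref{eq:close} as a convex combination in $k$ of quantities which, whenever $\chi_k(q) > 0$, each lie within $\varepsilon$ of $\mathcal K(q)_{i,j}$ by the previous step; this yields \eqref{eq:close}. The one genuinely technical point I would check carefully is the Lipschitz continuity in $q$ of the controlled Stokes traction on $\partial \mathcal S(q)$ at fixed $g$, which is needed to build the neighborhoods $U_{q_0}$; this parallels the existing continuity argument for $\mathcal K$ from Section 5 of \cite{Flimitcurves}, so I do not expect it to be a genuine obstacle, only careful book-keeping.
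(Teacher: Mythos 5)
Your proposal is correct and follows essentially the same route as the paper: pointwise application of Proposition \ref{prop:dens} to approximate the tractions $\Sigma(V_i,P_i)n$ tested against the $\phi_j$, continuity in $q$ of the controlled traction pairing at fixed boundary datum (which the paper also obtains by the domain-transport argument of Section 5 of \cite{Flimitcurves}), then compactness of $\mathcal Q_\delta$, a Lipschitz partition of unity, and linearity of $g\mapsto(u^c[g],p^c[g])$ to glue and conclude. The only differences are cosmetic bookkeeping of the $\varepsilon$'s, which the paper handles by a final reparametrization.
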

\begin{proof}
For any $q\in\mathcal Q_\delta$, $\varepsilon>0$, $i\in\{1,\ldots,6\}$ one may apply Proposition \ref{prop:dens} 
 to deduce that there exist $g^\varepsilon_i(q,\cdot)\in H^{1/2}_m(\Gamma)$ such that
$$\|\Sigma(V_i(q,\cdot),P_i(q,\cdot))n-\Sigma(u^c[g^\varepsilon_i(q,\cdot)],p^c[g^\varepsilon_i(q,\cdot)])n\|_{ (H^{1/2}_m(\partial\mathcal S(q)))'}\leq \varepsilon,$$
where we recall that $V_i$ was defined in \eqref{eq:elemst}, respectively $u^c[\cdot]$ in \eqref{eq:ctrlst}. Using integration by parts we may see that the functions $\phi_i$ defined in \eqref{eq:erv} are actually in $H^{1/2}_m(\partial\mathcal{S}(q))$, so we may then conclude that
\begin{align}\label{eq:intclose}
\left|\left(\int_{\partial\mathcal S(q)}\Sigma(u^c[g^\varepsilon_i(q,\cdot)],p^c[g^\varepsilon_i(q,\cdot)])n \cdot \phi_j(q,\cdot)\,d\sigma\right)_{i,j=1,\ldots,6}-\mathcal K(q)\right| \leq C \varepsilon,
\end{align}
for some $C>0$ independent of $q\in\mathcal Q_\delta$.

We may associate for $q,\tilde q\in\mathcal Q_\delta$, the unique solution $(\tilde u,\tilde p)\in H^1\times L^2/\R$ of the Stokes problem
\begin{align*}
-\Delta \tilde u(q,\tilde q,\cdot) + \nabla \tilde p(q,\tilde q,\cdot) &=0\text{ in }\mathcal{F}(\tilde q),\\
\div \tilde u(q,\tilde q,\cdot) &=0\text{ in }\mathcal{F}(\tilde q),\\
\tilde u(q,\tilde q,\cdot)&=0\text{ on }\partial\mathcal{S}(\tilde q),\\
\tilde u(q,\tilde q,\cdot)&=g^\varepsilon_i(q,\cdot)\text{ on }\partial\Omega.
\end{align*}
Arguing once more as in Section 5 of \cite{Flimitcurves}, one can obtain that the map
$$\tilde q\in\mathcal Q_\delta\mapsto \int_{\mathcal F(\tilde q)} D(\tilde u(q,\tilde q,\cdot)):D(V_i(\tilde q,\cdot)) \, dx $$
is Lipschitz. Using integration by parts similarly to \eqref{eq:stibp}, the fact that $q\in\mathcal Q_\delta\mapsto \mathcal K(q)$ is Lipschitz, and the estimate \eqref{eq:intclose}, we may deduce that for any $q\in\mathcal Q_\delta$, there exists $r_q=r_q(\varepsilon)>0$ such that

\begin{align}\label{eq:intclose2}
\left|\left(\int_{\partial\mathcal S(q)}\Sigma(\tilde u(\hat q,q,\cdot),\tilde p(\hat q,q,\cdot))n \cdot \phi_j(q,\cdot)\,d\sigma\right)_{i,j=1,\ldots,6}-\mathcal K(q)\right| \leq \tilde C \varepsilon,\text{ for }|q-\hat q|<r_q,
\end{align}
where once more $\tilde C>0$ is independent of $q\in\mathcal Q_\delta$.

Using the compactness of $\mathcal Q_\delta$, one may extract a finite subcover of balls $\{B(q_l,r_l)\}_{l=1,\ldots,N_\delta}$, and use a partition of unity $\{\rho_l\}_{l=1,\ldots,N_\delta}$ adapted to this subcover to obtain that
$$\bar g_i^\varepsilon(q,\cdot):=\sum_{l=1}^{N_\delta}\rho_l(q)g^\varepsilon_i(q_l,\cdot)$$
satisfies \eqref{eq:close} with $\hat C\varepsilon$ instead of $\varepsilon$ on the right-hand side. Reparametrizing with respect to $\varepsilon$ and setting $\mathcal E_\varepsilon:=\text{span}\{g^\varepsilon_i(q_l,\cdot):\ i=1,\ldots,6,\ l=1,\ldots,N_\delta\}$ allows us to finish the proof of the lemma.

\end{proof}

We then continue working with the functions $\bar g_i^\varepsilon$ given by Lemma \ref{lem:pou}.
Since the Stokes resistance matrix defined in \eqref{eq:stresm} is invertible, there exists $\bar\varepsilon>0$ small enough such that the matrix
\begin{align}\label{eq:ctrlmat}
\left(\int_{\partial\mathcal S(q)}\Sigma(u^c[\bar g^{\bar\varepsilon}_i],p^c[\bar g^{\bar\varepsilon}_i])n \cdot \phi_j(q,\cdot)\,d\sigma \right)_{i,j=1,\ldots,6}
\end{align}
is also invertible. 
Note that a priori  $\bar\varepsilon$ would depend on $q$, but since $q\in\mathcal Q_\delta\mapsto\mathcal K (q)$ is continuous and $Q_\delta$ is compact, $\bar\varepsilon>0$ can be chosen to be uniform for $q\in\mathcal Q_\delta$.

We will then take the finite dimensional subspace $\mathcal E$ we are looking for to be $\mathcal E_{\bar\varepsilon}$.
We may look for a control of the form
$$g(q,\cdot)=\sum_{i=1}^6\mu_i \bar g_i^{\bar\varepsilon}(q,\cdot),$$
where $\mu\in\R^6$. Plugging this into \eqref{eq:ode} and using the linearity of $\Sigma(u^c[\cdot],p^c[\cdot])$ with respect to $g$, the trajectory tracking problem reduces to finding a vector $\mu$ such that
\begin{align*}
\frac{d}{dt}(\mathcal M(q) q')+ \mathcal K(q) q'=-\left(\int_{\partial\mathcal S(q)}\Sigma(u^c[\bar g^{\bar\varepsilon}_i],p^c[\bar g^{\bar\varepsilon}_i])n \cdot \phi_j(q,\cdot)\,d\sigma \right)_{i,j=1,\ldots,6} \mu.
\end{align*}
However, since we have shown above that the matrix in \eqref{eq:ctrlmat} is invertible, one deduces the existence of such $\mu=\mu(q,q',q'')$ as given by
\begin{align}\label{eq:mudef}
\mu(q,q',q'')=-\left(\int_{\partial\mathcal S(q)}\Sigma(u^c[\bar g^{\bar\varepsilon}_i],p^c[\bar g^{\bar\varepsilon}_i])n \cdot \phi_j(q,\cdot)\,d\sigma \right)_{i,j=1,\ldots,6}^{-1}\left( \frac{d}{dt}(\mathcal M(q) q')+ \mathcal K(q) q'\right).\end{align}
It is further easy to check that $\mu$ is Lipschitz with respect to $(q,q',q'')\in\mathcal Q_\delta\times\mathcal K$. This concludes the proof of the existence part of Theorem \ref{thm:main1} by setting $$\mathcal C(q,q'q'')(\cdot):=\sum_{i=1}^6\mu(q,q',q'') \bar g_i^{\bar\varepsilon}(q,\cdot)).$$
Note that for any given control $g$, the uniqueness of system \eqref{eq:solidstokes1}-\eqref{eq:solidstokes2} follows from the reformulation given in \eqref{eq:split} and \eqref{eq:ode}.

Finally, the estimate \eqref{eq:cost} on the cost of the control follows from \eqref{eq:mudef}, the form of $\mathcal M$ in \eqref{eq:mass}, the boundedness of $\mathcal K$, and the boundedness of $\bar g_i^\varepsilon$ from Lemma \ref{lem:pou}, by writing
\begin{align*}
\|\mathcal C (q,q',q'')(\cdot)\|_{H^{1/2}_m(\Gamma)}\leq C_\delta\left| \frac{d}{dt}(\mathcal M(q) q')+ \mathcal K(q) q'\right |,
\end{align*}
which concludes the proof of Theorem \ref{thm:main1}.

\begin{remark}[Switching off the control]\label{rmk:ctrl}
Note that if the target trajectory $q$ satisfies the uncontrolled ODE $$ \frac{d}{dt}(\mathcal M(q) q')+ \mathcal K(q) q'=0$$ on some subinterval of $[0,T]$, then by construction the corresponding control $g$ given by $\mathcal C(q,q',q'')$ in fact vanishes for all times in this subinterval.
\end{remark}

%
%
%
%

\def\cprime{$'$}


\begin{thebibliography}{1}

%


\bibitem{Al1} F. Alouges, A. DeSimone, A. Lefebvre, Swimming at low Reynolds number at optimal strokes: An example, J. Nonlin.
Sci. 3 (2008) 277--302.

\bibitem{Al2}
F. Alouges, L. Giraldi, Enhanced controllability of low reynolds number swimmers in the presence of a wall, Acta Appl.
Math. 128 (2013) 153--179.

\bibitem{Bossis} G. Bossis, J. F. Brady, Dynamic simulation of sheared suspensions. I. General method, J. Chem. Phys. 80, 5141-5154 (1984)

\bibitem{BEG} M. Boulakia, A. C. Egloffe, C. Grandmont, Stability estimates for the unique continuation property of the Stokes system and for an inverse boundary coefficient problem, Inverse Problems 29 (2013): 115001.

\bibitem{BG} M. Boulakia, S. Guerrero, Local null controllability of a fluid-solid interaction problem in dimension 3, J. European Math Society, 15 (2013), no. 3, 825--856.
%
%
\bibitem{BO} M. Boulakia, A. Osses, Local null controllability of a two-dimensional fluid-structure interaction problem, ESAIM Control Optim. Calc. Var., 14 (2008), no. 1, 1--42.

\bibitem{brenner} H. Brenner, M. E. O'Neill,
On the Stokes resistance of multiparticle systems in a linear shear field,
Chemical Engineering Science,
Volume 27, Issue 7,
1972,
Pages 1421-1439.


\bibitem{fish1}
J. Carling, T. Williams, G. Bowtell, Self-propelled anguilliform swimming: simultaneous solution of the two-dimentional Navier-Stokes equations and Newton's laws of motion, J. of Experimental Biology, 201:3143-3166, 1998.

\bibitem{CMS} J.-M. Coron, F. Marbach, F. Sueur, Small time global exact null controllability of the Navier-
Stokes equation with Navier slip-with-friction boundary conditions, Journal of the European Mathematical Society, Volume 22, Issue 5, 2020, pp. 1625--1673.

\bibitem{Fabre} C. Fabre, G. Lebeau, Prolongement unique des solutions de l'\'equation de Stokes, Comm. Partial
Differential Equations, 21(3-4): 573--596, 1996.

\bibitem{fish2} G. P. Galdi, On the steady self-propelled motion of a body in a viscous incompressible fluid, Arch. Ration. Mech. Anal., 148(1):53-88, 1999.

\bibitem{GH} O. Glass, T. Horsin, Lagrangian controllability at low Reynolds number, ESAIM. Control, Optimisation and Calculus of Variations, vol. 22, no4, p. 1040--1053.

\bibitem{RTT} O. Glass, J. J. Kolumb\'an, F. Sueur, Remote trajectory tracking of rigid bodies immersed in a 2D perfect incompressible fluid, to appear in Pure and Applied Analysis, 2022.

\bibitem{Hil1}
M. Hillairet, Lack of collision between solid bodies in a 2D incompressible viscous
flow, Commun. Partial Differ. Equ. 32, 1345--1371 (2007)

\bibitem{Hil2} M. Hillairet, T. Takahashi, Collisions in three-dimensional fluid structure interaction
problems, SIAM J. Math. Anal. 40, 2451--2477, 2009.

\bibitem{Flimitcurves}
R. M. H\"ofer, C. Prange, F. Sueur,
Motion of several slender rigid filaments in a Stokes flow, Journal de l'Ecole Polytechnique, Tome 9, 327-380, 2022.

\bibitem{IT} O. Imanuvilov, T. Takahashi, Exact controllability of a fluid-rigid body system. J. Math. Pures Appl. 87, Issue 4 (2007), 408-437.

\bibitem{K}
J. J. Kolumb\'an. 
\newblock Control at a distance of the motion of a rigid body immersed in a two-dimensional viscous incompressible fluid, Journal of Differential Equations, Volume 269, Issue 1, 15 (2020), Pages 764-831.

\bibitem{fish3} H. Liu and K. Kawachi, A numerical study of undulatory swimming, J. comput. phys., 155(2):223-247, 1999.

\bibitem{Loh} J. Loh\'eac and A. Munnier, Controllability of 3D low Reynolds number swimmers, ESAIM COCV 20 (2014) 236--268

\bibitem{fish4}
J. San Martin, J. F. Scheid, T. Takahashi, M. Tucsnak, An initial and boundary problem modeling fish-like swimming, Arch. Ration. Mech. Anal., June 2008, Volume 188, Issue 3, pp 429-455.
%
\bibitem{fish5}
J. San Martin, T. Takahashi, M. Tucsnak, A control theoretic approach to the swimming of
microscopic organisms, Quart. Appl. Math., 65(3):405-424, 2007.

\bibitem{tem} R. Temam, Navier-Stokes Equations: Theory and numerical analysis, North-Holland Publications, 
1979.

\end{thebibliography}
\end{document}